\documentclass[11pt]{amsart}

\usepackage[T1]{fontenc}
\usepackage[utf8]{inputenc}
\usepackage{lmodern}
\usepackage{microtype}
\usepackage{amsmath,amssymb,amsthm,mathtools}
\usepackage[hidelinks]{hyperref}

\newtheorem{theorem}{Theorem}[section]
\newtheorem{proposition}[theorem]{Proposition}
\newtheorem{lemma}[theorem]{Lemma}

\theoremstyle{remark}

\newcommand{\K}{\mathcal K}
\newcommand{\HH}{\mathcal H}
\newcommand{\I}{\mathcal I}

\newcommand{\cl}[1]{\overline{#1}}
\newcommand{\down}[1]{\mathord{\downarrow}\!#1}
\newcommand{\up}[1]{\mathord{\uparrow}\!#1}

\title[Comeager hereditary families]{Comeager hereditary families of compact sets are big}

\author{Peter Banáš}

\address{Faculty of Mathematics and Physics, Charles University, Prague, Czech Republic}
\email{peter.banas149@student.cuni.cz}

\subjclass[2020]{Primary 03E15; Secondary 54B20, 54E52}
\keywords{Vietoris hyperspace, hereditary family, Baire category, Polish space}

\begin{document}

\begin{abstract}
Let $X$ be a Polish space and let $\K(X)$ be its Vietoris hyperspace.
A family $\I\subseteq\K(X)$ is hereditary if it is downward closed under
inclusion. Matheron and Zelený asked whether every comeager hereditary
family in $\K(X)$ contains a dense hereditary $G_\delta$ subfamily.
We give an affirmative answer in ZFC.
\end{abstract}

\maketitle

\section{Introduction}

Let $X$ be a Polish space, and let $\K(X)$ denote the space of all compact
subsets of $X$ equipped with the Vietoris topology.
A family $\I\subseteq\K(X)$ is \emph{hereditary} if
\[
K\subseteq L\in\I \quad\Longrightarrow\quad K\in\I.
\]
Following Matheron and Zelený~\cite{MZ05}, a set
$\mathcal A\subseteq\K(X)$ is called \emph{big} if it contains a dense
hereditary $G_\delta$ subset of $\K(X)$.

Matheron and Zelený asked whether every comeager hereditary family is big
\cite[Problem~2.1]{MZ05}. 

Notice that every comeager family $\I \subseteq\K(X)$ contains a dense $G_\delta$ subset of $\K(X)$. Thus, for a comeager hereditary family, the issue is whether such a subset can be chosen hereditary.

The question was later reiterated in their survey
\cite[Problem~6.9]{MZ07}. They proved the assertion for coanalytic families
\cite[Theorem~2.2]{MZ05}; see also \cite[Theorem~5.11]{MZ07}. Without a
definability assumption, they obtained the conclusion under the additional
set-theoretic hypothesis
\[
\omega_1^{L[x]}<\omega_1
\qquad\text{for every }x\in\omega^\omega
\]
\cite[Theorem~2.9]{MZ05}; see also \cite[Remark~5.12]{MZ07}.
Our main result shows that no additional hypothesis is needed.

\begin{theorem}\label{thm:main}
Let $X$ be a Polish space. Every comeager hereditary family
$\I\subseteq\K(X)$ contains a dense hereditary
$G_\delta$ subset of $\K(X)$.
\end{theorem}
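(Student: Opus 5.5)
The plan is to reduce Theorem~\ref{thm:main} to the coanalytic case proved by Matheron and Zelený \cite[Theorem~2.2]{MZ05}. The tool is the \emph{hereditary kernel}: for $\mathcal G\subseteq\K(X)$ put
\[
\ker(\mathcal G)=\{K\in\K(X): L\in\mathcal G \text{ for every compact } L\subseteq K\}.
\]
This family is always hereditary. If $\mathcal G$ is Borel, then $\ker(\mathcal G)$ is coanalytic, because its complement is the projection of the Borel set $\{(K,L): L\subseteq K,\ L\notin\mathcal G\}$. Moreover $\ker(\mathcal G)\subseteq\mathcal G$. So if we find a Borel (say $G_\delta$) family $\mathcal G\subseteq\I$ with $\ker(\mathcal G)$ comeager, then \cite[Theorem~2.2]{MZ05} applied to $\ker(\mathcal G)$ gives a dense hereditary $G_\delta$ subfamily of $\ker(\mathcal G)\subseteq\I$, which proves the theorem.

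The whole problem is therefore the following key lemma: \emph{if $\I$ is comeager and hereditary, then there is a $G_\delta$ family $\mathcal G\subseteq\I$ whose hereditary kernel is comeager.} A naive choice of a dense $G_\delta$ set $\mathcal G=\bigcap_n\mathcal U_n\subseteq\I$ fails. The subsets $L$ of a generic $K$ are far from generic, so they may miss some $\mathcal U_n$ even though they lie in $\I$.

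My first attempt is to exploit the hereditariness of $\I$ through a Kuratowski--Ulam/Banach--Mazur argument on the closed relation $\{(K,L):L\subseteq K\}$. Fix $\mathcal G_0=\bigcap_n\mathcal U_n\subseteq\I$ with each $\mathcal U_n$ open dense. The aim is to enlarge each $\mathcal U_n$ to an open set $\mathcal V_n\supseteq\mathcal U_n$ with $\bigcap_n\mathcal V_n\subseteq\I$ still. The enlargement should absorb the ``bad'' subsets $L$ of generic sets, and this should use only that each such $L$ is a subset of a member of $\I$.

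Concretely, I will play the Banach--Mazur game in $\K(X)$ with basic Vietoris sets $\langle V_0;V_1,\dots,V_k\rangle$. I will use the fact that a generic $K$ is a Cantor-like set, every finite piece of which is forced into shape by finitely many moves. A winning strategy for player~II for ``$K\in\mathcal G_0$'' should be convertible into a strategy that simultaneously handles every $L\subseteq K$. Shrinking the upper part $V_0$ only makes the condition stronger for subsets, and the lower parts $V_i$ can be dropped for $L$. Hence the sets $\bigcup\{\langle V_0;\dots\rangle \text{ played}\}$ should define open $\mathcal V_n$ depending only on the ``upper'' constraints. Then $K\in\bigcap_n\mathcal V_n$ would imply that $K\subseteq K'$ for some $K'\in\mathcal G_0\subseteq\I$, which gives $K\in\I$ by hereditariness.

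I expect the main obstacle to be this last step: extracting from the run of the game a single compact $K'\in\mathcal G_0$ containing a given $K$. That requires the upper constraints to be nested and compact-controlled, via diameters of $V_0$-covers going to $0$ in a totally bounded compatible metric, so that the limit exists. If this direct approach fails, the fallback is a transfinite iteration $\mathcal G_{\alpha+1}=\ker(\mathcal G_\alpha)$-style shrinking. Its stabilization at a countable ordinal would have to be obtained by a ZFC boundedness argument replacing the $\omega_1^{L[x]}<\omega_1$ hypothesis.
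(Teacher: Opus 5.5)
Your reduction is sound as stated: $\ker(\mathcal G)$ is hereditary, coanalytic when $\mathcal G$ is Borel, and contained in $\mathcal G\subseteq\I$. But it does no real work, because your key lemma is equivalent to the theorem. If $\I$ contains a dense hereditary $G_\delta$ set $\mathcal G$, then $\ker(\mathcal G)=\mathcal G$ is comeager. So the whole difficulty sits in the key lemma, and you do not prove it.

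The Banach--Mazur sketch breaks exactly at the step you flag. If your open sets $\mathcal V_n$ depend only on upper constraints, they are hereditary. Then $\bigcap_n\mathcal V_n\subseteq\I$ would already be the theorem, and the kernel would be unnecessary. However, $K\in\bigcap_n\mathcal V_n$ only says that for each $n$ some stage-$n$ position has $K$ in its upper part. That is essentially $K\in\bigcap_n\down\mathcal U_n$, and these witnesses need not come from one run. Even if upper parts are nested along runs, the positions whose upper parts contain $K$ form a tree of infinite height that is infinitely branching. Such a tree may have no infinite branch, and then you get no $K'\in\mathcal G_0$ above $K$. The weaker statement genuinely fails: take $X=\mathbb R$, $\I=\{K:0\notin K\}$ and $\mathcal U_n=\K(X)\setminus\{K:0\in K\subseteq[-n,n]\}$. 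Then $\bigcap_n\mathcal U_n=\I$, yet $\{0\}\in\bigcap_n\down\mathcal U_n$. Your fallback names the missing ingredient, a ZFC boundedness argument, but does not supply it.

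The paper avoids building any single superset. It works with the complement and applies Baire category inside upper cones.

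\begin{itemize}
\item $B=\K(X)\setminus\I$ is upward closed and covered by closed nowhere dense sets $F_r$.
\item For $K\in B$, the cone $\up K$ is closed in $\K(X)$, hence a Baire space, and $\up K\subseteq B\subseteq\bigcup_rF_r$.
\item So for some $r$ and some basic open $O_s$ you get $\varnothing\neq O_s\cap\up K\subseteq F_r$. This says exactly that $K\in C_{r,s}=\down O_s\setminus\down(O_s\setminus F_r)$.
\item Each $\cl{C_{r,s}}$ meets every $P(n)$ in a relatively nowhere dense set. The reason is that $\down(O_s\setminus F_r)$ is open and disjoint from $C_{r,s}$. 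Its trace on $P(n)$ is dense in the trace of $\down O_s$: approximate members of $O_s$ from $O_s\setminus F_r$ and take nearby finite subsets.
\item Hence $B\in\HH^{\mathrm{ext}}$, and the Matheron--Zelen\'y criterion ($\mathcal A$ is big $\iff$ $\K(X)\setminus\mathcal A\in\HH^{\mathrm{ext}}$) gives the theorem in ZFC.
\end{itemize}

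Each $K$ is caught by a single pair $(r,s)$, so no coherence across infinitely many stages is ever needed. That is precisely the obstacle your approach cannot get past.
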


We recall the ideal-theoretic criterion from \cite{MZ05} that will be used to
prove the theorem. For $n\geq1$, put
\[
P(n)=\{K\in\K(X):|K|\leq n\}.
\]
Each $P(n)$ is closed in $\K(X)$. Let $\HH$ be the family of all closed
sets $F\subseteq\K(X)$ such that
\[
F\cap P(n)\text{ is nowhere dense in }P(n)
\qquad\text{for every }n\geq1.
\]
Following \cite[Definition~2.3]{MZ05}, let
\[
\HH^{\mathrm{ext}}
=
\left\{
B\subseteq\K(X):
B\subseteq\bigcup_{m\in\mathbb N}F_m
\text{ for some sequence }(F_m)_{m\in\mathbb N}\subseteq\HH
\right\}.
\]
Thus $\HH^{\mathrm{ext}}$ is the $\sigma$-ideal generated by $\HH$.
The following characterization is \cite[Corollary~2.5]{MZ05}.

\begin{theorem}[Matheron--Zelený]\label{thm:MZcriterion}
For every $\mathcal A\subseteq\K(X)$,
\[
\mathcal A\text{ is big}
\quad\Longleftrightarrow\quad
\K(X)\setminus\mathcal A\in\HH^{\mathrm{ext}}.
\]
\end{theorem}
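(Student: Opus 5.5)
The proof splits into two implications. The forward one uses only density and heredity. The converse replaces each $F_m$ by its upward closure, made closed by working in a compactification.

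\emph{($\Rightarrow$).} Let $G\subseteq\mathcal A$ be a dense hereditary $G_\delta$ set, and write $\K(X)\setminus G=\bigcup_m F_m$ with each $F_m$ closed. Then $\K(X)\setminus\mathcal A\subseteq\bigcup_m F_m$, so it suffices to show $F_m\in\HH$.

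First, $G\cap P(n)$ is dense in $P(n)$ for every $n$. Take a basic neighbourhood in $P(n)$ of a finite set $\{x_1,\dots,x_k\}$, given by open sets $U_i\ni x_i$. By density there is $K\in G$ with $K\subseteq\bigcup_i U_i$ and $K\cap U_i\neq\emptyset$ for all $i$. Choosing $y_i\in K\cap U_i$, heredity gives $\{y_1,\dots,y_k\}\in G\cap P(n)$, and this set lies in the given neighbourhood.

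Since $P(n)$ is closed in the Polish space $\K(X)$, it is Polish. So $G\cap P(n)$ is a dense $G_\delta$ in $P(n)$. The closed set $F_m\cap P(n)$ is disjoint from it, hence meager in $P(n)$, hence nowhere dense by the Baire category theorem.

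\emph{($\Leftarrow$).} Suppose $\K(X)\setminus\mathcal A\subseteq\bigcup_m F_m$ with $F_m\in\HH$. The natural candidate is
\[
G=\K(X)\setminus\bigcup_m\up{F_m},
\qquad
\up{F}=\{K:\exists L\in F,\ L\subseteq K\}.
\]
This $G$ is automatically hereditary and contained in $\mathcal A$. Two things must be checked: that $G$ is $G_\delta$, and that it is dense.

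\emph{Step 1: making $\up{F_m}$ closed.} Embed $X$ into a compact metrizable space $\hat X$. Then $\K(X)$ is a $G_\delta$ subspace of $\K(\hat X)$, and for finite sets the Vietoris topologies agree. Let $\hat F_m$ be the closure of $F_m$ in $\K(\hat X)$. Since $\hat X$ is compact, a routine compactness argument in $\K(\K(\hat X))$ shows $\up{\hat F_m}$ is closed in $\K(\hat X)$. Define instead
\[
G=\K(X)\setminus\bigcup_m\bigl(\up{\hat F_m}\cap\K(X)\bigr).
\]
This is a hereditary $G_\delta$ subset of $\K(X)$. It is contained in $\mathcal A$ because $F_m\subseteq\up{\hat F_m}$. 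Also, a finite subset of $X$ lies in $\hat F_m$ only if it lies in $F_m$, since $F_m$ is closed in $\K(X)$.

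\emph{Step 2: density (the main obstacle).} By the Baire category theorem it suffices to show that each closed set $\up{\hat F_m}\cap\K(X)$ has empty interior in $\K(X)$. Fix a nonempty open set $\mathcal V\subseteq\K(X)$; since finite sets are dense, $\mathcal V$ contains some finite set.

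\emph{Step 2a: choosing a generic finite set.} We perturb the points one subset of indices at a time. For each nonempty $S\subseteq\{1,\dots,k\}$, use that $F_m\cap P(|S|)$ is nowhere dense in $P(|S|)$ to shrink the neighbourhoods. After finitely many steps we obtain $\{x_1,\dots,x_k\}\in\mathcal V$ such that no nonempty subset $\{x_i:i\in S\}$ lies in $F_m$, hence none lies in $\hat F_m$.

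\emph{Step 2b: an open set missing $\up{\hat F_m}$.} Claim: for sufficiently small neighbourhoods $U_i\ni x_i$, the open set $\{K: K\subseteq\bigcup U_i,\ K\cap U_i\neq\emptyset\ \forall i\}$ lies inside $\mathcal V$ and misses $\up{\hat F_m}$.

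If not, we get shrinking neighbourhoods and sets $L_j\in\hat F_m$ with $L_j\subseteq\bigcup_i U_i^{(j)}$. Passing to a subsequence, the set $S$ of indices $i$ with $L_j\cap U_i^{(j)}\neq\emptyset$ is constant. Then $L_j\to\{x_i:i\in S\}$ in $\K(\hat X)$. Since $\hat F_m$ is closed, $\{x_i:i\in S\}\in\hat F_m$, contradicting the genericity from Step 2a.

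This is the delicate point: subsets $L$ of a set $K$ near a finite configuration can be infinite and may meet only some of the $U_i$. That is exactly why genericity must hold for all subsets $S$, and why the closed hull in the compactification is needed for the limit argument.
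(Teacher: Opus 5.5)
The paper does not prove this statement. It quotes it from Matheron and Zelený (their Corollary~2.5) and uses only the direction ($\Leftarrow$), in the last line of the proof of Theorem~\ref{thm:main}. Your proof of both directions is correct and self-contained, so there is no in-paper argument to compare it with. What follows are points of economy and two details you should make explicit.

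In ($\Rightarrow$), your density argument for $G\cap P(n)$ is the same transversal-plus-heredity trick as Lemma~\ref{lem:closure}. After that you need neither the Baire category theorem nor the $G_\delta$ property: $F_m\cap P(n)$ is closed in $P(n)$ and its complement contains the dense set $G\cap P(n)$, so it has empty interior.

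In ($\Leftarrow$), the compactification is an unnecessary detour. If $F\subseteq\K(X)$ is closed, then $\up{F}$ is already closed in $\K(X)$. Indeed, for $K_j\to K$ the set $K\cup\bigcup_j K_j$ is compact, so witnesses $L_j\in F$ with $L_j\subseteq K_j$ have a subsequence converging to some $L\in F$ with $L\subseteq K$. In fact your set $\up{\hat F_m}\cap\K(X)$ equals $\up{F_m}$. Any $L\in\hat F_m$ contained in a member of $\K(X)$ lies in $\hat F_m\cap\K(X)=F_m$. So your $G$ is just the natural candidate $\K(X)\setminus\bigcup_m\up{F_m}$, and Step~2 is exactly the statement that $\up{F}$ is nowhere dense whenever $F\in\HH$.

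The two details to make explicit:
\begin{enumerate}
\item Record that $\varnothing\notin F_m$, since $\{\varnothing\}$ is open in $P(1)$. This disposes of the case $S=\varnothing$ in Step~2b, where all $L_j=\varnothing$. It also covers the case $\mathcal V=\{\varnothing\}$.
\item In Step~2a, take the $x_i$ distinct with pairwise disjoint neighbourhoods $U_i$. Then every member of $\langle U_i:i\in S\rangle\cap P(|S|)$ is a transversal $\{y_i:i\in S\}$ with $y_i\in U_i$. The invariant $\langle U_i:i\in S\rangle\cap F_m=\varnothing$ then survives all later shrinking.
\end{enumerate}
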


\section{Preliminary lemmas}

For $A\subseteq\K(X)$, define its hereditary closure by
\[
\down A
=
\{K\in\K(X): (\exists L \in A) (K\subseteq L)\}.
\]

\begin{lemma}\label{lem:open}
If $A\subseteq\K(X)$ is open, then $\down A$ is open.
\end{lemma}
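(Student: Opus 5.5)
Since $\down A=\bigcup_{L\in A}\down{\{L\}}$ is not open in general (the sets $\down{\{L\}}$ are closed), I will argue pointwise. Fix $K\in\down A$, choose $L\in A$ with $K\subseteq L$, and find a Vietoris neighbourhood of $K$ contained in $\down A$. The idea is to approximate $L$ by a compact set $L'$ that still lies in $A$ and contains every compact set close enough to $K$. Because $A$ is open, there is a basic Vietoris neighbourhood of $L$ inside $A$; shrinking it, I may take it of the form
\[
\langle U_1,\dots,U_k\rangle=\Bigl\{M\in\K(X): M\subseteq \textstyle\bigcup_i U_i,\ M\cap U_i\neq\emptyset\ \text{for all } i\Bigr\},
\]
with $U_i$ open and $L\in\langle U_1,\dots,U_k\rangle\subseteq A$. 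Put $U=\bigcup_i U_i$, an open set containing $L$ and hence $K$.

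For each $i$ pick a point $x_i\in L\cap U_i$, and consider the open set $\mathcal V=\{M\in\K(X): M\subseteq U\}$, which contains $K$. For $M\in\mathcal V$ define $L'=M\cup\{x_1,\dots,x_k\}$. It is compact, and since $M\subseteq U$ and $x_i\in U_i\subseteq U$ we get $L'\subseteq U$. Also $L'\cap U_i\ni x_i$, so $L'$ meets every $U_i$. Hence $L'\in\langle U_1,\dots,U_k\rangle\subseteq A$, and as $M\subseteq L'$ we conclude $M\in\down A$.

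Thus $\mathcal V$ is an open neighbourhood of $K$ contained in $\down A$. (If $\emptyset\in\K(X)$ is allowed, the same argument applies with $M=\emptyset$.) Since $K\in\down A$ was arbitrary, $\down A$ is open. The only point that needs care is reducing to a basic neighbourhood $\langle U_1,\dots,U_k\rangle$ with finitely many $U_i$, and this is exactly the standard Vietoris base. Adding the finitely many witness points $x_i$ is what makes the hitting conditions hold automatically, so no real obstacle remains.
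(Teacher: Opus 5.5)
Your proof is correct and follows the paper's argument: take a basic neighbourhood $\langle U_1,\ldots,U_k\rangle\subseteq A$ of $L$, fix witnesses $x_i\in L\cap U_i$, and observe that the upper Vietoris set $\{M: M\subseteq\bigcup_i U_i\}$ lies in $\down A$ because $M\cup\{x_1,\ldots,x_k\}\in\langle U_1,\ldots,U_k\rangle$. The only difference is that the paper treats $L=\varnothing$ separately (then $K=\varnothing$, and the isolated point $\{\varnothing\}$ is an open subset of $\down A$). Your parenthetical addresses $M=\varnothing$ rather than $L=\varnothing$, so your argument covers that case only under the convention $k=0$, $\langle\,\rangle=\{\varnothing\}$.
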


\begin{proof}
Let $K\in\down A$, and choose $L\in A$ with $K\subseteq L$.
If $L=\varnothing$, then $K=\varnothing$, and the isolated point
$\{\varnothing\}$ is contained in $\down A$.

Suppose $L\neq\varnothing$. Choose a basic Vietoris neighbourhood
\[
\langle U_1,\ldots,U_m\rangle
=
\left\{
M\in\K(X):
M\subseteq\bigcup_{i=1}^m U_i
\text{ and }
M\cap U_i\neq\varnothing\text{ for all }i
\right\}
\]
such that
\[
L\in\langle U_1,\ldots,U_m\rangle\subseteq A.
\]
We claim that
\[
\left\{
M\in\K(X):M\subseteq\bigcup_{i=1}^m U_i
\right\}
\subseteq\down A.
\]
Indeed, choose $x_i\in L\cap U_i$ for $1\leq i\leq m$. If
$M\subseteq\bigcup_iU_i$, then
\[
M'=M\cup\{x_1,\ldots,x_m\}
\]
belongs to $\langle U_1,\ldots,U_m\rangle\subseteq A$, while
$M\subseteq M'$. The displayed upper Vietoris set is therefore an open
neighbourhood of $K$ contained in $\down A$.
\end{proof}

Fix a bounded compatible complete metric $d$ on $X$ with $d \leq 1$, and equip the nonempty compact subsets of $X$ with the
associated Hausdorff metric. Extend it to $\K(X)$ by declaring
$d_H(\varnothing,K)=1$ whenever $K\neq\varnothing$. This is a compatible
complete metric for the Vietoris topology.

The next elementary fact is the sequential form of
\cite[Lemma~2.6(i)]{MZ05}.

\begin{lemma}\label{lem:finite-approx}
Let $K,L,L_j\in\K(X)$, where $K\subseteq L$, $K\in P(n)$, and
$L_j\to L$. Then there are $K_j\subseteq L_j$ such that
$K_j\in P(n)$ and $K_j\to K$.
\end{lemma}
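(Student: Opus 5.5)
The plan is to treat the empty case separately and then approximate each point of $K$ by a nearest point of $L_j$.

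If $K=\varnothing$, we put $K_j=\varnothing$ for all $j$. Then $K_j\subseteq L_j$, $K_j\in P(n)$, and the sequence is constant, so it converges to $K$.

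Suppose $K\neq\varnothing$. Then $L\supseteq K$ is nonempty. Since $d_H(\varnothing,M)=1$ for every nonempty $M$ and $L_j\to L$, we get $L_j\neq\varnothing$ for all sufficiently large $j$. For the finitely many $j$ with $L_j=\varnothing$ we set $K_j=\varnothing$; this does not affect convergence. Write $K=\{x_1,\ldots,x_k\}$ with $k\leq n$. For each $j$ with $L_j\neq\varnothing$ and each $i$, we use compactness of $L_j$ to choose a point $y^j_i\in L_j$ with
\[
d(x_i,y^j_i)=d(x_i,L_j),
\]
and we put $K_j=\{y^j_1,\ldots,y^j_k\}$. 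Then $K_j\subseteq L_j$ and $|K_j|\leq k\leq n$, so $K_j\in P(n)$.

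To get convergence, note that $x_i\in K\subseteq L$, so
\[
d(x_i,y^j_i)=d(x_i,L_j)\leq d_H(L,L_j)\to 0.
\]
The Hausdorff distance between two finite sets indexed by the same index set is at most the maximum of the pointwise distances. Therefore
\[
d_H(K_j,K)\leq\max_i d(x_i,y^j_i)\to 0,
\]
so $K_j\to K$. No step is a real obstacle. The only points needing care are the empty case and the convention that $d_H(\varnothing,\cdot)=1$.
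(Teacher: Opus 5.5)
Your proof is correct and essentially the same as the paper's: both treat $K=\varnothing$ separately, use $d_H(\varnothing,\cdot)=1$ to get $L_j\neq\varnothing$ eventually, and pick one point of $L_j$ near each $x_i$. The only difference is that you take exact nearest points via compactness of $L_j$. The paper instead picks $x_i^j$ with $d(x_i,x_i^j)\leq d_H(L,L_j)+\frac1j$, which avoids compactness.
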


\begin{proof}
If $K=\varnothing$, take $K_j=\varnothing$. Otherwise write
$K=\{x_1,\ldots,x_m\}$, where $m\leq n$. Since $L\neq\varnothing$,
the sets $L_j$ are nonempty for all sufficiently large $j$. For such
$j$, choose $x_i^j\in L_j$ so that
\[
d(x_i,x_i^j)\leq d_H(L,L_j)+\frac1j
\qquad (1\leq i\leq m),
\]
and put $K_j=\{x_1^j,\ldots,x_m^j\}$. For the finitely many remaining
indices, put $K_j=\varnothing$. Then $K_j\subseteq L_j$,
$|K_j|\leq n$, and $d_H(K_j,K)\to0$.
\end{proof}

\begin{lemma}\label{lem:closure}
If $A\subseteq\K(X)$ is hereditary, then, for every $n\geq1$,
\begin{equation}\label{eq:closure}
P(n)\cap\cl A=\cl{A\cap P(n)}.
\end{equation}
All closures in \eqref{eq:closure} are taken in $\K(X)$.
\end{lemma}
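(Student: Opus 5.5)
We prove the two inclusions in \eqref{eq:closure} separately.

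The inclusion $\cl{A\cap P(n)}\subseteq P(n)\cap\cl A$ is immediate. Indeed, $A\cap P(n)\subseteq A$ gives $\cl{A\cap P(n)}\subseteq\cl A$. Moreover, $A\cap P(n)\subseteq P(n)$ and $P(n)$ is closed in $\K(X)$, so $\cl{A\cap P(n)}\subseteq P(n)$. This direction does not use that $A$ is hereditary.

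For the reverse inclusion, let $K\in P(n)\cap\cl A$. Since $\K(X)$ is metrizable via $d_H$, we can choose a sequence $L_j\in A$ with $L_j\to K$. Apply Lemma~\ref{lem:finite-approx} with $L=K$; the hypothesis $K\subseteq L$ holds trivially and $K\in P(n)$. This yields sets $K_j\subseteq L_j$ with $K_j\in P(n)$ and $K_j\to K$. Because $A$ is hereditary and $L_j\in A$, each $K_j$ lies in $A$, so $K_j\in A\cap P(n)$. Hence $K\in\cl{A\cap P(n)}$.

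There is no real obstacle here; the only substantive input is Lemma~\ref{lem:finite-approx}, which supplies approximants of size at most $n$ lying inside the approximating sets $L_j$. The one point to check is the degenerate case $K=\varnothing$. Lemma~\ref{lem:finite-approx} already handles it by taking $K_j=\varnothing$, which belongs to $A$ whenever $A\neq\varnothing$. The case $A=\varnothing$ cannot arise, since $K\in\cl A$ forces $A\neq\varnothing$.
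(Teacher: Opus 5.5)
Your proof is correct and uses essentially the same idea as the paper: take members of $A$ close to $K$, and use heredity to pass to subsets of at most $n$ points lying near the points of $K$. The only difference is packaging. You get these subsets by applying Lemma~\ref{lem:finite-approx} with $L=K$ to a sequence in $A$, whereas the paper repeats the argument directly inside a basic neighbourhood $\langle V_1,\ldots,V_m\rangle$ and handles $K=\varnothing$ via the isolatedness of $\{\varnothing\}$ rather than via heredity.
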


\begin{proof}
Since $P(n)$ is closed, the inclusion from right to left is immediate.

For the reverse inclusion, let $K\in P(n)\cap\cl A$. If $K=\varnothing$,
then the fact that $\{\varnothing\}$ is open implies
$\varnothing\in A\cap P(n)$.

Otherwise write $K=\{x_1,\ldots,x_m\}$, where $m\leq n$. Let
$\mathcal V$ be an arbitrary basic Vietoris neighbourhood of $K$.
Refining $\mathcal V$, we may choose open sets $V_1,\ldots,V_m$, with
$x_i\in V_i$, such that
\[
K\in\langle V_1,\ldots,V_m\rangle\subseteq\mathcal V.
\]
Since $K\in\cl A$, choose
\[
T\in A\cap\langle V_1,\ldots,V_m\rangle.
\]
For each $i\leq m$, choose $y_i\in T\cap V_i$, and put
$T'=\{y_1,\ldots,y_m\}$. Then $T'\subseteq T$, so heredity gives
$T'\in A$. Moreover, $T'\in P(n)\cap\mathcal V$. Thus every
neighbourhood of $K$ meets $A\cap P(n)$.
\end{proof}

\begin{proposition}\label{prop:local}
Let $O\subseteq\K(X)$ be open and let $F\subseteq\K(X)$ be closed and
nowhere dense. Put
\[
U=\down O,\qquad
V=\down(O\setminus F),\qquad
C=U\setminus V.
\]
Then $\cl C\in\HH$.
\end{proposition}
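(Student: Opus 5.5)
Since $\cl C$ is closed, membership in $\HH$ comes down to showing, for each fixed $n\geq1$, that $\cl C\cap P(n)$ has empty interior relative to $P(n)$. The plan is to argue by contradiction. Assume that some nonempty set $W$, open in $P(n)$, satisfies $W\subseteq\cl C$. I will then produce a point of $V$ inside $W$, which is impossible for the following reason. By Lemma~\ref{lem:open}, both $U=\down O$ and $V=\down(O\setminus F)$ are open, and $C\cap V=\varnothing$. Since $V$ is open, this gives $\cl C\cap V=\varnothing$, and hence $W\cap V=\varnothing$.

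First I place $W$ inside the closure of $U\cap P(n)$. Since $C\subseteq U$, we have $W\subseteq\cl U\cap P(n)$. The set $U$ is hereditary, being a downward closure, so Lemma~\ref{lem:closure} gives $\cl U\cap P(n)=\cl{U\cap P(n)}$. Thus $W$ is a nonempty relatively open subset of $P(n)$ contained in $\cl{U\cap P(n)}$. Consequently $W$ meets $U\cap P(n)$, and I fix $K\in W\cap U$ with $|K|\leq n$. By the definition of $U$, there is some $L\in O$ with $K\subseteq L$.

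The key step is to use that $F$ is closed and nowhere dense. Because of this, $O\setminus F$ is dense in the open set $O$, so I can pick $L_j\in O\setminus F$ with $L_j\to L$. Lemma~\ref{lem:finite-approx} then supplies sets $K_j\subseteq L_j$ with $K_j\in P(n)$ and $K_j\to K$. Each $K_j$ is a subset of a member of $O\setminus F$, so $K_j\in V$. On the other hand, $K\in W$ and $W$ is open in $P(n)$, so $K_j\in W$ for all large $j$. This contradicts $W\cap V=\varnothing$.

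I do not expect a serious obstacle. The one point needing care is passing from ``$W\subseteq\cl U$'' to ``$W$ meets $U$ at a point of $P(n)$''. This is exactly where the heredity of $U$ and Lemma~\ref{lem:closure} are needed, since $U$ alone might meet $W$'s neighbourhood only at infinite sets.
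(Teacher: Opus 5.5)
Your proposal is correct and follows the paper's proof essentially step for step. The only difference is organisational: the paper first proves, as a separate claim, that $V\cap P(n)$ is dense in $U\cap P(n)$, using the same $L_j\in O\setminus F$ and Lemma~\ref{lem:finite-approx}; it then runs the same contradiction via Lemma~\ref{lem:closure} and $\cl C\cap V=\varnothing$.
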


\begin{proof}
By Lemma~\ref{lem:open}, both $U$ and $V$ are open and hereditary.

We first show that, for every $n\geq1$,
\begin{equation}\label{eq:density}
V\cap P(n)\text{ is dense in }U\cap P(n).
\end{equation}
Fix $K\in U\cap P(n)$, and choose $L\in O$ with $K\subseteq L$.
Since $F$ is nowhere dense and $O$ is open, there are
$L_j\in O\setminus F$ with $L_j\to L$. By
Lemma~\ref{lem:finite-approx}, there are $K_j\subseteq L_j$ such that
$K_j\in P(n)$ and $K_j\to K$. Each $K_j$ belongs to $V$, proving
\eqref{eq:density}.

Suppose towards a contradiction that $\cl C\cap P(n)$ has nonempty
interior in $P(n)$. Choose a nonempty relatively open set
$W\subseteq P(n)$ such that
\[
W\subseteq\cl C.
\]
Since $C\subseteq U$, Lemma~\ref{lem:closure} gives
\[
W\subseteq P(n)\cap\cl U=\cl{U\cap P(n)}.
\]
Since $W$ is nonempty and relatively open in $P(n)$, it follows that
$W\cap U\cap P(n)\neq\varnothing$. This intersection is relatively open
in $U\cap P(n)$, so \eqref{eq:density} yields $W\cap V\neq\varnothing$.

On the other hand, $C\cap V=\varnothing$, and $V$ is open. Hence
\[
\cl C\cap V=\varnothing,
\]
contradicting $W\subseteq\cl C$. Thus $\cl C\cap P(n)$ is nowhere
dense in $P(n)$ for every $n\geq1$, and therefore $\cl C\in\HH$.
\end{proof}

\section{Proof of the main theorem}

\begin{proof}[Proof of Theorem~\ref{thm:main}]
Put
\[
Y=\K(X)
\qquad\text{and}\qquad
B=Y\setminus\I.
\]
Since $\I$ is hereditary, $B$ is upward closed:
\[
K\in B,\quad K\subseteq L
\quad\Longrightarrow\quad
L\in B.
\]
Since $B$ is meager, choose closed nowhere dense sets
$F_r\subseteq Y$, $r\in\mathbb N$, such that
\[
B\subseteq\bigcup_{r\in\mathbb N}F_r.
\]
Let $(O_s)_{s\in\mathbb N}$ be a countable basis for $Y$. For
$r,s\in\mathbb N$, put
\[
U_s=\down O_s,\qquad
V_{r,s}=\down(O_s\setminus F_r),\qquad
C_{r,s}=U_s\setminus V_{r,s}.
\]
By Proposition~\ref{prop:local},
\begin{equation}\label{eq:Hpieces}
\cl{C_{r,s}}\in\HH
\qquad\text{for all }r,s\in\mathbb N.
\end{equation}

We claim that
\begin{equation}\label{eq:cover}
B\subseteq\bigcup_{r,s\in\mathbb N}C_{r,s}.
\end{equation}
Fix $K\in B$, and consider its upper cone
\[
\up K=\{L\in Y:K\subseteq L\}.
\]
The set $\up K$ is closed in $Y$: indeed,
\[
Y\setminus\up K
=
\bigcup_{x\in K}
\{L\in Y:L\subseteq X\setminus\{x\}\},
\]
and the sets on the right are open. Thus $\up K$ is a Baire space.
Since $B$ is upward closed,
\[
\up K\subseteq B\subseteq\bigcup_{r\in\mathbb N}F_r.
\]
By the Baire category theorem, for some $r$, the set
$F_r\cap\up K$ has nonempty relative interior in $\up K$. Hence there
is $s$ such that
\[
\varnothing\neq O_s\cap\up K\subseteq F_r.
\]
The first relation says that some member of $O_s$ contains $K$, so
$K\in U_s$. The second says that no member of
$O_s\setminus F_r$ contains $K$, so $K\notin V_{r,s}$. Therefore
$K\in C_{r,s}$, proving \eqref{eq:cover}.

It follows from \eqref{eq:cover} and \eqref{eq:Hpieces} that
\[
B\subseteq
\bigcup_{r,s\in\mathbb N}\cl{C_{r,s}},
\]
and hence $B\in\HH^{\mathrm{ext}}$. By
Theorem~\ref{thm:MZcriterion}, the family $\I$ is big.
\end{proof}

\section*{Acknowledgements}

I thank Miroslav Zelený for reading the argument and
encouraging its publication.

\end{document}